\newtheorem{thm}{Theorem}
\newtheorem{defn}{Definition}
\newtheorem{pro}{Proposition}
\newtheorem{rk}{Remark}
\numberwithin{equation}{section} \setcounter{tocdepth}{1}
\begin{document}

\vspace{0.5in}
\renewcommand{\bf}{\bfseries}
\renewcommand{\sc}{\scshape}
\vspace{0.5in}

\title[Leslie's prey-predator model in discrete time]{Leslie's prey-predator model in discrete time}

\author{U. A. Rozikov, S.K. Shoyimardonov}

 \address{U.\ A.\ Rozikov\\  V.I.Romanovskii Institute of mathematics,
81, Mirzo Ulug'bek str., 100125, Tashkent, Uzbekistan.}
\email {rozikovu@yandex.ru}

\address{Department of Higher Mathematics; Tashkent University of Information Technologies named after Muhammad al-Khwarizmi;
Tashkent, Uzbekistan.}
\email{shoyimardonov@inbox.ru}





\keywords{population, Leslie model, prey-predator, discrete-time.}

\subjclass[2010]{34D20 (92D25)}

\begin{abstract} We consider the Leslie's prey-predator model with discrete-time. This model is given by a non-linear
 evolution operator depending on five parameters. We show that this operator has two fixed points and define
 type of each fixed point depending on the parameters. Finding two invariant sets of the evolution operator
 we study the dynamical systems generated by the operator on each invariant set.
 Depending on the parameters we classify the dynamics between
a predator and a prey of the Leslie's model.
\end{abstract}

\maketitle

\section{Introduction}
First predator-prey models were introduced by Lotka and Volterra.
These models have been extensively studied by mathematical and biological researchers.
The investigations are important in understanding the dynamics between
a predator and a prey (population with two species), which live together in the same
environment (see \cite{Aziz}, \cite{Brit}, \cite{Ch}, \cite{M}, \cite{SB}, \cite{SS} and references therein). One interests for a suitable conditions that allow the both species survive in
equilibria \cite{RU}. But in many papers (see for example \cite{RGan}-\cite{Rob}) have shown that considering
a harvesting term in the model can lead to the extinction of any species.

Following \cite{Brit} consider the Leslie's prey-predator model in continuous time.
At time moment $t\geq 0$ consider the following model:

\begin{equation}\label{1}
\begin{cases}
\frac{dx}{dt}=ax-bx^2-cxy\\
\frac{dy}{dt}=dy-\alpha\frac{y^2}{x},
\end{cases}
\end{equation}
where $a,b,c,d$ and $\alpha$ are positive parameters. The predator equation is logistic, with carrying capacity proportional to the prey population. This two species food chain model describes a prey population $x$ which serves as food for a predator $y$.
This model usually studied for continuous time.

In this paper (as in \cite{RSH} and \cite{RV}) we study a model of discrete time process of Leslie's prey-predator model (\ref{1}), which has the following form
\begin{equation}\label{discr}
V:
\begin{cases}
x^{(1)}=x(a-1-bx-cy)\\
y^{(1)}=y(d-1-\alpha\frac{y}{x}).
\end{cases}
\end{equation}
where $(x,y)\in R^2_+=\{(u,v)\in R^2:u>0, v\geq0\}$ and $a>1,b>0, c>0, d>1, \alpha>0.$

We are interested to the behavior of the sequence $V^n(x,y)$, $n\geq 1$ for any initial point $(x,y)\in R^2_+$.

The paper is organized as follows. In Section 2 we construct two invariant sets with respect to operator $V$. Section 3 devoted to
fixed points of the operator, moreover the type of each fixed point is defined depending on the parameters of the model.
In Section 4 under some conditions on parameters we give limit points of trajectories.
In the last section we give numerical analysis of trajectories corresponding to the remaining cases of parameters.

\section{Invariant sets}

The set $M$ is called an \emph{invariant} with respect to operator $V$ if $V(M)\subset M.$ We formulate the following:
 \begin{pro} The set
 $$M_{1}=\left\{(x,y)\in R^2_+ \, : \, 0<x<\frac{a-1}{b}, \, y=0\right\}$$
 is an invariant with respect to $V$.
 \end{pro}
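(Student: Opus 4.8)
The plan is to verify the inclusion $V(M_1)\subset M_1$ directly, by taking an arbitrary point $(x,0)\in M_1$ (so $0<x<\frac{a-1}{b}$ and the second coordinate is $0$) and checking that its image again satisfies both defining conditions of $M_1$. Applying $V$ gives $x^{(1)}=x(a-1-bx)$ and $y^{(1)}=0\cdot\bigl(d-1-0\bigr)=0$, so the second coordinate of the image vanishes automatically and never produces a division by zero. The whole problem therefore collapses to the one-dimensional map $x\mapsto f(x):=x(a-1-bx)$ on the interval $\left(0,\frac{a-1}{b}\right)$, and I must show that $f$ sends this interval into itself.

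First I would dispose of the easy condition $x^{(1)}>0$: since $0<x<\frac{a-1}{b}$ we have $bx<a-1$, hence $a-1-bx>0$, and multiplying by $x>0$ gives $f(x)>0$. The vanishing of the $y$-coordinate and this positivity hold for all admissible parameters and present no obstruction. The substance of the statement is the remaining upper bound $f(x)<\frac{a-1}{b}$, which is exactly what keeps the image inside $M_1$ rather than merely inside $R^2_+$.

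The main obstacle is precisely this upper bound. I would treat $f(x)=(a-1)x-bx^2$ as a downward-opening parabola, locate its vertex at $x^\ast=\frac{a-1}{2b}$ (which lies inside the interval), and compute the maximum value $f(x^\ast)=\frac{(a-1)^2}{4b}$. Thus $f(x)\le\frac{(a-1)^2}{4b}$ throughout the interval, and the whole matter reduces to the elementary inequality $\frac{(a-1)^2}{4b}<\frac{a-1}{b}$, i.e.\ $a-1<4$. A cleaner way to organize the same computation is to conjugate $f$ to the logistic map: the substitution $x=\frac{a-1}{b}\,u$ turns the dynamics on $\left(0,\frac{a-1}{b}\right)$ into $u\mapsto (a-1)\,u(1-u)$ on $(0,1)$, which is a self-map of $(0,1)$ exactly when $a-1<4$.

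I expect the only genuine difficulty to be that this upper bound is \emph{not} unconditional: the computation shows that $f$ preserves the interval precisely under the restriction $a<5$ (with $a=5$ already forcing $f(x^\ast)=\frac{a-1}{b}$ at the vertex, which falls on the excluded boundary). I would therefore either invoke a standing hypothesis $a<5$ if the paper's conventions supply one, or flag that the statement should carry this restriction; everything else in the argument is routine and holds for all parameters satisfying $a>1,\,b>0$.
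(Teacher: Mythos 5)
Your proof is correct, and it is the direct verification the paper itself intends --- the paper omits the argument entirely, saying only that it is ``straightforward.'' But your closer look at the upper bound is a genuine contribution rather than a pedantic worry: under the paper's standing hypotheses (only $a>1$, $b>0$), the proposition as stated is false. As you show, on $M_1$ the operator reduces to $f_{a,b}(x)=x(a-1-bx)$, whose maximum over $\left(0,\frac{a-1}{b}\right)$ is $\frac{(a-1)^2}{4b}$, attained at the interior point $\frac{a-1}{2b}$; hence $V(M_1)\subset M_1$ holds if and only if $\frac{(a-1)^2}{4b}<\frac{a-1}{b}$, i.e. $a<5$. For $a\ge 5$ invariance fails concretely: with $a=5$, $b=1$ the point $(2,0)\in M_1$ maps to $(4,0)\notin M_1$, and the next iterate is $(0,0)$, which even exits the domain $R^2_+$ (for $a>5$ the corresponding iterate becomes negative). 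This is not an idle boundary case for the paper, since Proposition \ref{oned}(iv) works with $a>3+\sqrt{5}>5$, a regime in which the interval $\left(0,\frac{a-1}{b}\right)$ is genuinely not invariant and --- as in Devaney's analysis of the quadratic family with $\mu>4$ --- chaotic dynamics survives only on the Cantor set of points whose orbits never leave the interval. So your argument is complete for $1<a<5$, and your flag that the proposition must carry this restriction (or that later uses of $M_1$ must be read accordingly) is warranted.
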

 The proof of this Proposition is straightforward.

 \begin{defn}\label{top} (see. \cite{De}, page 47) Let $f:A\rightarrow A$ and $g:B\rightarrow B$ be two maps. $f$ and $g$ are said to be topologically conjugate if there exists a homeomorphism $h:A\rightarrow B$ such that, $h\circ f=g\circ h$. The homeomorphism $h$ is called a topological conjugacy.
 \end{defn}
 Under a condition on parameters the restriction of the operator $V$ on $M_1$
 is topologically conjugate to the well-known quadratic
 family $F_\mu(x)=\mu x(1-x)$ (discussed in \cite{De}). Let us do this point clear:
 in the system  (\ref{discr}),  if $y=0$ then from the first equation we get $x(a-1-bx)=f_{a,b}(x).$
 \begin{pro} Two maps $F_\mu(x)$ and $f_{a,b}(x)$ are topologically conjugate for $\mu=3-a$.
 \end{pro}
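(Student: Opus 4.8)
The plan is to realize the conjugacy by an \emph{affine} homeomorphism of the line, which is the simplest map capable of intertwining two quadratics. Writing the two maps in expanded form, $f_{a,b}(x)=(a-1)x-bx^{2}$ and $F_\mu(x)=\mu x-\mu x^{2}$, I would look for $h(x)=\lambda x+c$ with $\lambda\neq0$ satisfying the conjugacy relation of Definition~\ref{top}, namely $h\circ f_{a,b}=F_\mu\circ h$.

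Substituting and expanding both sides yields two quadratic polynomials in $x$; equating the coefficients of $x^{2}$, $x^{1}$ and $x^{0}$ produces three equations in the unknowns $\lambda,c,\mu$. The $x^{2}$-equation forces $\lambda=b/\mu$; the constant-term equation factors as $c\bigl(1-\mu(1-c)\bigr)=0$, so it offers two branches, $c=0$ and $c=(\mu-1)/\mu$. Feeding each branch into the $x^{1}$-equation $a-1=\mu(1-2c)$ gives $\mu=a-1$ in the first case and $\mu=3-a$ in the second. Since the statement singles out $\mu=3-a$, I take the second branch, which produces the explicit conjugacy $h(x)=\dfrac{b}{3-a}\,x+\dfrac{2-a}{3-a}$.

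It then remains to check that this $h$ is genuinely a topological conjugacy. Because $h$ is affine with slope $b/(3-a)\neq0$ — here one must assume $a\neq3$, equivalently $\mu\neq0$ — it is a homeomorphism of $\mathbb{R}$ with affine inverse, and by construction $h\circ f_{a,b}=F_\mu\circ h$ holds as an identity of polynomials, hence on any $h$-matched pair of invariant domains. Conceptually the branch $\mu=3-a$ is the conjugacy that does \emph{not} fix the origin: one verifies that $h$ sends the nontrivial fixed point $x^{*}=(a-2)/b$ of $f_{a,b}$ to the fixed point $0$ of $F_\mu$ and the extinction fixed point $0$ to the nonzero fixed point $(\mu-1)/\mu$ of $F_\mu$. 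Correspondingly the multiplier $f_{a,b}'(x^{*})=3-a$ is matched with $F_\mu'(0)=\mu$, which is precisely the relation $\mu=3-a$; this cross-check also reassures us that the chosen branch is the intended one.

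The main obstacle here is not analytic but a matter of careful bookkeeping: selecting the correct root of the constant-term equation so as to land on $\mu=3-a$ rather than the orientation-standard value $\mu=a-1$, and then tracking the endpoints of the intervals so that $h$ is verified to be a homeomorphism between the relevant invariant sets (which is also what forces the tacit restriction $a\neq3$).
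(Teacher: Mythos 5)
Your proof is correct and takes essentially the same route as the paper: an affine ansatz for the conjugacy whose coefficients are matched term by term, with the nonzero constant-term branch singling out $\mu=3-a$. You merely conjugate in the opposite direction ($h\circ f_{a,b}=F_\mu\circ h$ instead of the paper's $h\circ F_\mu=f_{a,b}\circ h$), so your $h(x)=\frac{b}{3-a}\,x+\frac{2-a}{3-a}$ is exactly the inverse of the paper's homeomorphism $h(x)=\frac{3-a}{b}\,x+\frac{a-2}{b}$; your added observations (the tacit restriction $a\neq 3$ needed for $h$ to be a homeomorphism, and the fixed-point/multiplier cross-check) are sound refinements that the paper omits.
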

 \begin{proof} We take the linear map $h(x)=px+q$ and by Definition \ref{top} we should have $h(F_\mu(x))=f_{a,b}(h(x)),$ i.e.,
 $$p\mu x(1-x)+q=(px+q)(a-1-b(px+q))$$
 from this identity we get
$$
\begin{cases}
p\mu=bp^2\\
p\mu=p(a-1-2bq)\\
q=q(a-1-bq)
\end{cases}
\Rightarrow
\begin{cases}
p=\frac{\mu}{b}\\
q=\frac{a-1-\mu}{2b}\\
q=\frac{a-2}{b}
\end{cases} \Rightarrow a=3-\mu .$$
Thus, the homeomorphism is $h(x)=\frac{3-a}{b}x+\frac{a-2}{b}.$ Moreover, since $a>1$ we have $\mu<2.$
\end{proof}
The importance of this proposition is that if two maps are topologically conjugate
then they have essentially the same dynamics (see. \cite{De}, page 53).

 \begin{pro} \label{inv} Let  $1<a\leq 2.$ Then the set
 $$M_{2}=\left\{(x,y)\in R^2_{+}: \, \frac{\alpha y}{d-1}\leq x<\frac{a-1-cy}{b}\right\}$$
   is an invariants with respect to operator $V$ if

   (1) $1<d\leq2$  or

   (2) $d<4a-3$ and $0<x<\frac{2\alpha\sqrt{bc\alpha(d-1)+b^2\alpha^2+c^2(a-1)(d-1)}-\alpha(c(d-1)+2b\alpha)}{c^2(d-1)}$.
   \end{pro}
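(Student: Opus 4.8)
The plan is to verify directly that $V(M_2)\subseteq M_2$. Write $(x',y')=V(x,y)$, so that $x'=x(a-1-bx-cy)$ and $y'=y\bigl(d-1-\tfrac{\alpha y}{x}\bigr)$. The two defining inequalities of $M_2$ already secure membership in $R^2_+$: the first, $\tfrac{\alpha y}{d-1}\le x$, is equivalent to $d-1-\tfrac{\alpha y}{x}\ge0$, hence $y'\ge0$; the second, $x<\tfrac{a-1-cy}{b}$, is equivalent to $a-1-bx-cy>0$, hence $x'>0$. So the whole problem reduces to reproving the \emph{same} two inequalities for the image point, namely (A) $\tfrac{\alpha y'}{d-1}\le x'$ and (B) $bx'+cy'<a-1$.

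The heart of the matter is inequality (A), and this is where I expect the real work and where the parameter hypotheses must enter. I would introduce $u=\tfrac{\alpha y}{x}\in[0,d-1]$; then $y'=\tfrac{x}{\alpha}u(d-1-u)$, and after dividing by $x>0$ inequality (A) becomes $\tfrac{u(d-1-u)}{d-1}\le a-1-bx-\tfrac{c}{\alpha}ux$. The right–hand side minus the left is a quadratic in $u$ with positive leading coefficient $\tfrac{1}{d-1}$, whose global minimum over $u$ equals $(a-1-bx)-\tfrac{(d-1)(cx+\alpha)^2}{4\alpha^2}$. Requiring this minimum to be nonnegative is exactly \[(d-1)c^2x^2+2\alpha\bigl(c(d-1)+2b\alpha\bigr)x+\alpha^2\bigl((d-1)-4(a-1)\bigr)\le0,\] and the hypothesis $d<4a-3$ makes the constant term negative, so the inequality holds precisely on $0\le x<x_+$ with $x_+$ the positive root; computing that root reproduces verbatim the bound displayed in case (2). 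Thus case (2) is exactly the condition under which the vertex of this parabola stays nonnegative, forcing (A) for every admissible $u$.

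Inequality (B), by contrast, is the easy one, and it is immediate in case (1). Since $1<a\le2$ gives $0<a-1-bx-cy<1$ on $M_2$, we get $x'=x(a-1-bx-cy)<x$; and when $1<d\le2$ we likewise have $y'=y\bigl(d-1-\tfrac{\alpha y}{x}\bigr)\le(d-1)y\le y$. Hence $bx'+cy'<bx+cy<a-1$, which is (B); this contractivity is presumably why case (1) is stated with no extra restriction on $x$. Under case (2) one checks (B) in the same spirit, now using that the admissible $x$ are small.

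The step I expect to be the genuine obstacle is the preservation of the \emph{first}, ratio-type inequality (A) near the upper boundary $bx+cy\to(a-1)^-$: there $x'\to0^{+}$ while $y'$ need not tend to $0$, so $\tfrac{\alpha y'}{d-1}\le x'$ is in danger of failing. Controlling this collapse of the $x$–coordinate relative to $y$ is precisely what the quadratic bound of case (2) is designed to do, and it is also the point at which case (1) must be examined most carefully. Making the ratio inequality survive this boundary behaviour is the crux of the whole argument, and I would concentrate the effort there.
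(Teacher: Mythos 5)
Your reduction of invariance to the two inequalities (A) $\frac{\alpha y'}{d-1}\le x'$ and (B) $bx'+cy'<a-1$, together with your treatment of (A) under hypothesis (2), is correct and is in substance the paper's own argument: the paper writes the same inequality $y^{(1)}\le\frac{(d-1)x^{(1)}}{\alpha}$ as a quadratic in $y$ with positive leading coefficient $\frac{\alpha^2}{x(d-1)}$ and requires its discriminant to be nonpositive, which is exactly your ``vertex nonnegative'' condition; both routes land on the same quadratic inequality in $x$ and the same root $x_+$, i.e.\ condition (2). Your contractivity argument giving (B) when $1<a\le 2$ and $1<d\le 2$ also reproduces the paper's computation $x^{(1)}<x$, $y^{(1)}\le y$.

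The proposal nevertheless has a genuine gap, and you named it yourself: inequality (A) is never established in case (1); you defer it as ``the crux'' and stop. This gap cannot be closed, because the danger you point out at the boundary $bx+cy\to(a-1)^-$ is real. Take $a=2$, $b=c=\alpha=1$, $d=2$, so that $M_2=\{(x,y):\, y\le x<1-y\}$. The point $(0.66,0.33)$ lies in $M_2$, but its image is $(x',y')=\bigl(0.66\cdot 0.01,\;0.33\cdot 0.5\bigr)=(0.0066,\,0.165)$, and $0.165>0.0066$, so the image violates $y'\le x'$ and leaves $M_2$. Hence case (1) of the proposition is false as stated, and the paper's own Case-1 argument (``$x^{(1)}<x$ and $y^{(1)}\le y$, hence $M_2$ is invariant'') is a non sequitur: coordinatewise decrease preserves the upper constraint $bx+cy<a-1$ but says nothing about the lower constraint $\frac{\alpha y}{d-1}\le x$, which is exactly the ratio inequality you flagged. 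So your instinct located precisely the point where your proof stops and the paper's breaks down; what is missing there is not more effort but a restriction on $x$ of the kind appearing in (2), since it is that bound which prevents $x'$ from collapsing relative to $y'$. A smaller gap of the same kind: (B) under case (2) is asserted ``in the same spirit'' but not proved; it does hold, yet it requires the nonobvious estimate $x_+\le\frac{\alpha}{b\alpha+c(d-1)}$ combined with $y'\le\frac{(d-1)x'}{\alpha}$ and $x'<(a-1)x$ --- a verification the paper omits entirely.
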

   \begin{proof}
 If $(x,y)\in M_{2}$ then by $1<a\leq2$ we have
$$x^{(1)}=x(a-1-bx-cy)<x(a-1)\leq x,$$
and by the form of $M_2$ we have
$$x^{(1)}=x(a-1-bx-cy)> x(a-1-(a-1-cy)-cy)=0.$$
Similarly,
$$y^{(1)}=y(d-1-\alpha\frac{y}{x})\geq y(d-1-\alpha\frac{d-1}{\alpha})=0.$$
Next we show that $y^{(1)}\leq \frac{(d-1)x^{(1)}}{\alpha}.$

\emph{Case-1}. If $1<d\leq2$ then $y^{(1)}=y(d-1-\alpha\frac{y}{x})\leq y(d-1)\leq y$. Hence, in this case $x^{(1)}<x, \,\, y^{(1)}\leq y$ and $M_2$ is an invariant.

\emph{Case-2}. Let we consider the inequality $y^{(1)}\leq \frac{(d-1)x^{(1)}}{\alpha}.$ Instead of $x^{(1)}, y^{(1)}$ we put their expressions and we have
$$\frac{\alpha^2y^2}{x(d-1)}-(cx+\alpha)y+x(a-1-bx)\geq0.$$

Last inequality is always true with respect to $y$ if a discriminant is nonpositive, i.e.,
$$c^2x^2+(2c\alpha+\frac{4b\alpha^2}{d-1})x+\alpha^2(\frac{d-4a+3}{d-1})\leq0.$$
By solving this inequality we obtain the (2) condition of the Proposition.Thus the proposition is proved.
\end{proof}
\begin{rk} The conditions (to the parameters $a,b,c,d,\alpha$) in Proposition \ref{inv} are sufficient for the set $M_2$ to be an invariant.

\end{rk}
\section{Fixed points}

A \textbf{\emph{fixed point}} (\cite{RSH}) $p$ for a mapping $F: R^{m}\rightarrow R^{m}$ is a solution to the equation $F(p)=p$.
We will study fixed points of the operator (\ref{discr}). Let $\lambda^{(0)}=(x^{(0)}, y^{(0)})$ be an initial point.
By the continuity of the operator $V$, (\ref{discr}), the limit points of each trajectory $\lambda^{(n)}=V^n(\lambda^{(0)})$
 are fixed points for the operator $V$.

\begin{pro} For the operator (\ref{discr}) fixed points are
$$(i)  \ \ \ \ \ \   \lambda_{1}=\left(\frac{a-2}{b},0\right),  (a>2) $$ and
$$(ii) \ \  \lambda_{2}=\left(\frac{(a-2)\alpha}{b\alpha+c(d-2)}, \frac{(a-2)(d-2)}{b\alpha+c(d-2)}\right)$$
where $a>2, b\alpha+c(d-2)>0$ (if $1<a<2, b\alpha+c(d-2)<0$ then from second inequality we get that $d<2$ and $\frac{(a-2)(d-2)}{b\alpha+c(d-2)}<0$.)
\end{pro}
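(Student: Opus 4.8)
The plan is to solve the fixed-point system $V(x,y)=(x,y)$ directly, exploiting the fact that every point of the domain $R^2_+$ has $x>0$, which lets me divide the first equation by $x$. Writing out the defining equations,
$$x=x(a-1-bx-cy), \qquad y=y\left(d-1-\alpha\frac{y}{x}\right),$$
and cancelling the nonzero factor $x$ in the first equation, I obtain the equivalent linear relation $bx+cy=a-2$. Since the left-hand side is nonnegative on $R^2_+$ and is strictly positive for any genuine interior or boundary fixed point, this already signals that $a>2$ will be the natural existence condition.

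Next I would split on whether the predator coordinate $y$ vanishes. If $y=0$, the second equation is satisfied automatically, and the linear relation collapses to $bx=a-2$, giving $x=\tfrac{a-2}{b}$; requiring $x>0$ forces $a>2$, which produces the boundary fixed point $\lambda_1$. If instead $y\neq 0$, I divide the second equation by $y$ to get $d-1-\alpha\tfrac{y}{x}=1$, that is $y=\tfrac{(d-2)x}{\alpha}$. Substituting this into $bx+cy=a-2$ reduces everything to a single linear equation in $x$, whose solution is $x=\tfrac{(a-2)\alpha}{b\alpha+c(d-2)}$; back-substitution then yields the stated value of $y$, giving the interior fixed point $\lambda_2$.

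Finally I would verify the admissibility constraints, since an algebraic solution of the system counts as a fixed point only when it actually lies in $R^2_+$. The denominator $b\alpha+c(d-2)$ must be nonzero for $\lambda_2$ to be defined, and demanding that both coordinates of $\lambda_2$ be positive (with $x>0$ and $y\geq 0$) pins down precisely the condition $a>2$, $b\alpha+c(d-2)>0$. The complementary sign discussion noted in the statement, namely that $1<a<2$ together with $b\alpha+c(d-2)<0$ forces $d<2$ and a negative $y$-coordinate, follows immediately by inspecting the signs of the numerators and denominators in the same two closed-form expressions.

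I do not expect any genuine obstacle here: the system is polynomial and decouples cleanly once one divides by $x$ and, in the second branch, by $y$. The only step requiring care is bookkeeping, namely recording exactly which parameter inequalities guarantee that each candidate point belongs to $R^2_+$, so that the parenthetical remark about the degenerate parameter regime is correctly attached to $\lambda_2$ rather than left ambiguous.
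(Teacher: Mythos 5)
Your proposal is correct and follows essentially the same route as the paper: split on $y=0$ versus $y>0$, cancel the nonzero factors $x$ and $y$, and solve the resulting linear system, then impose positivity to obtain the conditions $a>2$ and $b\alpha+c(d-2)>0$. The only cosmetic difference is that the paper treats the degenerate case $a=2$ explicitly (showing $bx=-cy$ forces $x=y=0$), whereas you absorb it into the sign bookkeeping via the relation $bx+cy=a-2$; both settle the same point.
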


\begin{proof}
i) If $y=0$ then the equation $x=x(a-1-bx)$, (where $x>0$) has the solution $x=\frac{a-2}{b}$.\\
ii) If $y>0$ then the system of equations $x=x(a-1-bx-cy), y=y(d-1-\alpha\frac{y}{x})$ has unique solution   $\left(\frac{\alpha(a-2)}{b\alpha+c(d-2)},\frac{(a-2)(d-2)}{b\alpha+c(d-2)}\right)$ and here for
existence of fixed point we have conditions $a>2$ and $b\alpha+c(d-2)>0$. If $a=2$ then
\begin{equation}\label{2}
\begin{cases}
x^{(1)}=x(1-bx-cy)=x\\
y^{(1)}=y(d-1-\alpha\frac{y}{x})=y
\end{cases}
\end{equation}
from this we get $bx=-cy$, but all parameters are positive, so $x=y=0$ and there is no solution of the system (\ref{2}).
\end{proof}

\begin{pro}\label{pr} The following relations hold
\begin{itemize}
\item[(1)]
$$\lambda_{1}=\left\{\begin{array}{ll}
{\rm nonhyperbolic}, \ \ {\rm if} \ \ a=4 \ \ or \ \ d=2  \\[2mm]
{\rm attractive}, \ \ \ \ {\rm if} \ \ 2<a<4, 1<d<2 \\[2mm]
{\rm repeller}, \ \ \ \ {\rm if} \ \  a>4, d>2 \\[2mm]
{\rm saddle}, \ \ \  \ \ \ {\rm if}  \ \ {\rm otherwise} \ \
\end{array}\right.$$
\end{itemize}

\begin{itemize}
\item[(2)]
$$\lambda_{2}=\left\{\begin{array}{lll}
{\rm nonhyperbolic}, \ \ {\rm if} \ \  |\mu_{1}|=1, \ \ {\rm or} \ \ |\mu_{2}|=1 \\[2mm]
{\rm attractive}, \ \ \ \ {\rm if} \ \ |\mu_{1}|<1,|\mu_{2}|<1 \\[2mm]
{\rm repeller}, \ \ \ \ {\rm if} \ \ |\mu_{1}|>1,|\mu_{2}|>1\\[2mm]
{\rm saddle}, \ \ \  \ \ \ {\rm if}  \ \ {\rm otherwise} \ \
\end{array}\right.$$
\end{itemize}
where
$$\mu_{1}=-\frac{1}{2}\frac{1}{b\alpha+cd-2c}(ab\alpha+bd\alpha+cd^2-6b\alpha-6cd+8c+\sqrt{D}),$$
$$\mu_{2}=-\frac{1}{2}\frac{1}{b\alpha+cd-2c}(ab\alpha+bd\alpha+cd^2-6b\alpha-6cd+8c-\sqrt{D}).$$
$$D=a^2b^2\alpha^2-2ab^2d\alpha^2-6abcd^2\alpha-4ac^2d^3+b^2d^2\alpha^2+2bcd^3\alpha+c^2d^4+24abcd\alpha$$
$$+24ac^2d^2-24abc\alpha-48ac^2d-24bcd\alpha-24c^2d^2+32ac^2+32bc\alpha+64c^2d-48c^2.$$
\end{pro}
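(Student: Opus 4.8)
The plan is to classify each fixed point by the standard linearization criterion: compute the Jacobian $DV$ of the operator (\ref{discr}), evaluate it at $\lambda_1$ and $\lambda_2$, find the eigenvalues, and then compare their moduli to $1$. Recall that a hyperbolic fixed point is attractive when all eigenvalues satisfy $|\mu|<1$, a repeller when all satisfy $|\mu|>1$, a saddle when some are below and some above $1$ in modulus, and nonhyperbolic when at least one eigenvalue lies on the unit circle. So the entire proposition reduces to eigenvalue bookkeeping at the two fixed points.

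First I would write down the Jacobian in general. With $V(x,y)=\big(x(a-1-bx-cy),\, y(d-1-\alpha y/x)\big)$, the partial derivatives give
\begin{equation}
DV(x,y)=\begin{pmatrix} a-1-2bx-cy & -cx \\[1mm] \alpha y^2/x^2 & d-1-2\alpha y/x \end{pmatrix}.
\end{equation}
For $\lambda_1=\big(\tfrac{a-2}{b},0\big)$ the off-diagonal lower-left entry vanishes (since $y=0$), so the matrix is upper triangular and the eigenvalues are simply the diagonal entries. Substituting $x=\tfrac{a-2}{b}$, $y=0$ gives eigenvalues $3-a$ and $d-1$. Then the four cases for $\lambda_1$ follow by elementary inequalities: $|3-a|<1 \Leftrightarrow 2<a<4$ and $|d-1|<1 \Leftrightarrow 1<d<2$ give attractive; both moduli exceeding $1$ (so $a>4$ and $d>2$, using $a>2$) give repeller; the boundary values $a=4$ or $d=2$ give nonhyperbolic; everything else is a saddle. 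This part is routine and I expect no difficulty.

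The substantive work is the second fixed point $\lambda_2$. Here I would substitute the coordinates $x^\ast=\tfrac{\alpha(a-2)}{b\alpha+c(d-2)}$, $y^\ast=\tfrac{(a-2)(d-2)}{b\alpha+c(d-2)}$ into the Jacobian and compute its two eigenvalues $\mu_{1},\mu_{2}$ as the roots of the characteristic polynomial $\mu^2-(\operatorname{tr})\mu+\det=0$. Using the fixed-point relations $a-1-bx^\ast-cy^\ast=1$ and $d-1-\alpha y^\ast/x^\ast=1$ should simplify the diagonal entries considerably, so that $\operatorname{tr}DV(\lambda_2)=2-bx^\ast-\alpha y^\ast/x^\ast$ and $\det DV(\lambda_2)$ reduces to a manageable rational expression in the parameters. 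The eigenvalues are then $\mu_{1,2}=\tfrac{1}{2}\big(\operatorname{tr}\pm\sqrt{\operatorname{tr}^2-4\det}\big)$, and the claim is that after clearing denominators this yields exactly the displayed $\mu_1,\mu_2$ with discriminant $D$. The classification then follows verbatim from the modulus comparisons, with no further case analysis needed beyond quoting $|\mu_i|$ against $1$.

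The main obstacle will be the algebra in the $\lambda_2$ computation: verifying that the trace and determinant, once the fixed-point coordinates are inserted, collapse to the precise rational functions whose quadratic-formula roots match the stated $\mu_1,\mu_2$, and in particular confirming that the quantity under the radical equals the degree-four polynomial $D$ in $a,b,c,d,\alpha$. This is a long but mechanical expansion; the key simplifying device is to exploit the fixed-point identities before expanding, rather than substituting the fractions raw, since that keeps the intermediate expressions bounded. I would also note that $\mu_1\mu_2=\det DV(\lambda_2)$ and $\mu_1+\mu_2=\operatorname{tr}DV(\lambda_2)$ provide an independent cross-check on the final formulas. Once the eigenvalue formulas are confirmed, the statement of the classification is immediate from the definitions of attractive, repeller, saddle, and nonhyperbolic fixed points.
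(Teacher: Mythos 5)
Your proposal is correct and follows essentially the same route as the paper: compute the Jacobian of $V$, observe that at $\lambda_1$ it is triangular with eigenvalues $3-a$ and $d-1$, and at $\lambda_2$ form the characteristic quadratic (whose trace and determinant match the paper's $4-d-\frac{(a-2)b\alpha}{b\alpha+c(d-2)}$ and $(3-d)\bigl(1-\frac{(a-2)b\alpha}{b\alpha+c(d-2)}\bigr)+\frac{c(a-2)(d-2)^2}{b\alpha+c(d-2)}$ exactly), then classify by comparing eigenvalue moduli to $1$. Your use of the fixed-point identities to simplify the diagonal entries is only a minor computational shortcut, not a different method, so no further comparison is needed.
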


\begin{proof}
(1) First we find the Jacobian for the system (\ref{discr}):
\begin{equation}\label{jac}
\textbf{J}=\begin{bmatrix}
a-1-2bx-cy & -cx\\
\alpha\frac{y^2}{x^2} & d-1-2\alpha\frac{y}{x}
\end{bmatrix}
\end{equation}
Then the Jacobian at the fixed point $\lambda_{1}=\left(\frac{a-2}{b},0\right)$ has the form
\begin{center}
$\textbf{J}(\lambda_{1})=\begin{bmatrix}
3-a & -c\frac{(a-2)}{b} \\
0 & d-1
\end{bmatrix}$
\end{center}
and the eigenvalues of this matrix are $\nu_{1}=3-a, \nu_{2}=d-1$.
By solving inequalities  $|\nu_{1}|<1$, $|\nu_{2}|<1$ we get $2<a<4, 1<d<2$,
thus,  $\lambda_{1}$ is an attracting fixed point.
The proof of all other cases are similar.

(2) The Jacobian at the second fixed point
$\lambda_{2}=\left(\frac{(a-2)\alpha}{b\alpha+c(d-2)}, \frac{(a-2)(d-2)}{b\alpha+c(d-2)}\right)$ has the following form
 \begin{center}
$\textbf{J}(\lambda_{2})=\begin{bmatrix}
1-\frac{(a-2)b\alpha}{b\alpha+c(d-2)} & -\frac{(a-2)c\alpha}{b\alpha+c(d-2)} \\
\frac{(d-2)^2}{\alpha} & 3-d
\end{bmatrix}$.
\end{center}
Then the eigenvalues $\mu_{1}, \mu_{2}$ of this matrix are the roots of the following quadratical equation:
$$\mu^2-\mu\left(4-d-\frac{(a-2)b\alpha}{b\alpha+c(d-2)}\right)+(3-d)\left(1-\frac{(a-2)b\alpha}{b\alpha+c(d-2)}\right)+\frac{c(a-2)(d-2)^2}{b\alpha+c(d-2)}=0.$$
\end{proof}

We note that there exist coefficients satisfying the condition    $|\mu_{1}|<1,|\mu_{2}|<1.$ For example, if $a=3, b=1, c=2, d=4.5, \alpha=0.5$ then $\mu_{1}=0.3114, \mu_{2}=-0,9023.$

\section{Limit points}

\subsection{Definitions} The set of limit points of trajectory is very important in the theory of dynamical systems, so we will study the
set of  limit points of trajectories of the operator (\ref{discr}).
\begin{defn} \label{toptr} (see. \cite{De}, page 49) $f:J\rightarrow J$ is said to be topologically transitive if for any pair of open sets $U,V\subset J$ there exists $k>0$ such that $f^{k}(U)\cap V\neq\emptyset.$
\end{defn}
\begin{defn} \label{sens} (see. \cite{De}, page 49) $f:J\rightarrow J$ has sensitive dependence on initial conditions if there exists $\delta>0$ such that, for any $x\in J$ and any neighborhood $N$ of $x$, there exists $y\in N$ and $n\geq0$ such that $|f^{n}(x)-f^{n}(y)|>\delta.$
\end{defn}
\begin{defn} \label{chaos} (see. \cite{De}, page 50)  $f:J\rightarrow J$ is said to be \textbf{chaotic} on $J$ if

  1. $f$ has sensitive dependence on initial conditions;

  2. $f$ is topologically transitive;

  3. periodic points are dense in $J.$
  \end{defn}

\subsection{On the invariant set $M_1$} Consider trajectories on the invariant set $M_1$ first.
\begin{pro}\label{zero} If  $1<a\le2$ and initial point $(x^{(0)}, y^{(0)})\in M_1$ then $$\lim_{n\to \infty} (x^{(n)}, y^{(n)})=(0,0).$$
\end{pro}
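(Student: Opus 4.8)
The plan is to exploit the fact that every point of $M_1$ has vanishing second coordinate, together with the invariance of $M_1$ (Proposition~1), so that the whole trajectory remains in $M_1$. Indeed, if $y^{(0)}=0$ then the second equation of (\ref{discr}) gives $y^{(1)}=y^{(0)}(d-1-\alpha\frac{y^{(0)}}{x^{(0)}})=0$, and by induction $y^{(n)}=0$ for every $n$; hence $y^{(n)}\to 0$ is immediate. Everything thus reduces to the one–dimensional recursion $x^{(n+1)}=f_{a,b}(x^{(n)})$ with $f_{a,b}(x)=x(a-1-bx)$ and $x^{(0)}\in\left(0,\frac{a-1}{b}\right)$, and it remains to prove $x^{(n)}\to 0$.

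First I would show that the sequence $(x^{(n)})$ is strictly decreasing and stays in $\left(0,\frac{a-1}{b}\right)$. Positivity and the upper bound are exactly the invariance of $M_1$, i.e. $f_{a,b}\left(\left(0,\frac{a-1}{b}\right)\right)\subset\left(0,\frac{a-1}{b}\right)$. For monotonicity, observe that $f_{a,b}(x)<x$ is equivalent to $a-1-bx<1$, that is $bx>a-2$; since $1<a\le 2$ we have $a-2\le 0<bx$ for all $x>0$, so $f_{a,b}(x)<x$ holds on the whole interval. Therefore $(x^{(n)})$ is decreasing and bounded below by $0$, hence convergent to some limit $L\ge 0$.

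Next I would identify the limit. By continuity of $f_{a,b}$, the number $L$ is a fixed point, so $L=L(a-1-bL)$, i.e. $L(a-2-bL)=0$, with solutions $L=0$ and $L=\frac{a-2}{b}$. Because $1<a\le 2$, the second root satisfies $\frac{a-2}{b}\le 0$, which is incompatible with $L\ge 0$ except when $a=2$, in which case it again equals $0$. Hence $L=0$, and combined with $y^{(n)}\equiv 0$ this yields $(x^{(n)},y^{(n)})\to(0,0)$.

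This is in essence a monotone–convergence argument, so I do not anticipate a serious obstacle; the only delicate points are the strict inequality $a-2<bx$ (which is precisely where the hypothesis $a\le 2$ enters, excluding any positive fixed point within the range) and the borderline value $a=2$, where the two candidate limits coincide at $0$. As a consistency check one may instead appeal to Proposition~2: the restriction of $V$ to $M_1$ is topologically conjugate to $F_\mu$ with $\mu=3-a\in[1,2)$, and the conjugacy $h(x)=\frac{3-a}{b}x+\frac{a-2}{b}$ sends the attracting fixed point $1-\frac{1}{\mu}$ of $F_\mu$ to $x=0$, recovering the same conclusion.
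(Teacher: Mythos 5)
Your proof is correct, and it differs from the paper's in a useful way: you run a single monotone-convergence argument over the whole range $1<a\le 2$, whereas the paper splits into two cases. For $1<a<2$ the paper estimates $x^{(1)}=x^{(0)}(a-1-bx^{(0)})<x^{(0)}(a-1)$, hence $x^{(n+1)}<x^{(0)}(a-1)^n\to 0$, which gives an explicit geometric rate of decay; the monotone argument (decreasing, bounded below, limit must be a fixed point of $f_{a,b}$, and the only admissible fixed point is $0$) is used in the paper only for the borderline case $a=2$, where $a-1=1$ and the geometric bound degenerates. Your version extends exactly that second argument to all $a\in(1,2]$, with the key observation that $f_{a,b}(x)<x$ for all $x>0$ because $bx>0\ge a-2$, and the fixed-point identification $L(a-2-bL)=0$ with $(a-2)/b\le 0$ excluded by $L\ge 0$. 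What you lose relative to the paper is the quantitative rate $x^{(n)}=O((a-1)^n)$ in the non-degenerate case; what you gain is uniformity and the elimination of the case split. Your reduction to the one-dimensional map via $y^{(n)}\equiv 0$ is the same as the paper's (implicit) use of the invariance of $M_1$, and your concluding consistency check via the conjugacy $h$ matches the paper's supplementary remark — indeed you state it more accurately, since the attracting fixed point of $F_\mu$ is $\frac{\mu-1}{\mu}=1-\frac{1}{\mu}$ (the paper writes $\frac{1-\mu}{\mu}$, a sign slip), and $h\left(\frac{\mu-1}{\mu}\right)=0$ as you say.
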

\begin{proof} If $1<a<2$ then $x^{(1)}=x^{(0)}(a-1-bx^{(0)})<x^{(0)}(a-1)$, from this we get $x^{(n+1)}<x^{(0)}(a-1)^n.$ Thus, $$\lim_{n\to \infty} x^{(n)}=0$$  since $a-1<1.$ In addition, by conjugacy of the operators $F_\mu(x)$, $f_{a,b}(x)$ and by Proposition 5.3. (in \cite{De}, page 32) from $1<\mu<3$ we have $0<a<2.$ In Proposition \ref{top}, $f_{a,b}=h\circ F_{\mu}\circ h^{-1}$ since $h\circ F_{\mu}=f_{a,b}\circ h$, so $f_{a,b}^{n}=h\circ F_{\mu}^{n}\circ h^{-1}$ and going to the limit from two sides we have  $\mathop{\lim }\limits_{n\to \infty} f_{a,b}^{n}(x)=\mathop{\lim }\limits_{n\to \infty} h\circ F_{\mu}^{n}\circ h^{-1}(x)=h(\frac{1-\mu}{\mu})=0$,  where $h(x)=\frac{3-a}{b}x+\frac{a-2}{b}.$
If $a=2$ then $x^{(n+1)}=x^{(n)}(1-bx^{(n)})<x^{(n)}$ for any $n\in N$ (where $N$ is the set of all positive integers),
i.e., the sequence  $x^{(n)}$ monotonically decreasing. Since the sequence is bounded from below, it has a limit.
 The limit should be a fixed point for the function $f_{2,b}(x)$, i.e. the unique fixed point $x=0$. Hence, $x^{(n)}\rightarrow0$ as $n\rightarrow\infty.$

We note that, by the domain of the operator (\ref{discr}),  $x^{(1)}=x^{(0)}(a-1-bx^{(0)})>0 \Rightarrow x^{(0)}<\frac{a-1}{b}.$
\end{proof}

\begin{pro}\label{pro1} Let $2<a<4$.

(i)  $f_{a,b}(x)=x(a-1-bx)$ has an attracting fixed point $p_0=\frac{a-2}{b}$ and repelling fixed point 0.

(ii)  If $0<x<\frac{a-1}{b}$ then $$\lim_{n\to \infty} f_{a,b}^{n}(x)=p_0$$
\end{pro}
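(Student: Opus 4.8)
The plan is to dispatch (i) by elementary calculus and (ii) by reducing $f_{a,b}$ to the classical logistic family in the parameter range where global convergence is already known.

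For part (i) I would first locate the fixed points by solving $f_{a,b}(x)=x$, i.e. $x(a-2-bx)=0$, which gives exactly $x=0$ and $x=p_0=\frac{a-2}{b}$ (and $p_0\in(0,\frac{a-1}{b})$ since $a>2$). Then I would compute $f_{a,b}'(x)=a-1-2bx$ and evaluate it at the two fixed points: $f_{a,b}'(0)=a-1$ and $f_{a,b}'(p_0)=3-a$. Because $2<a<4$ forces $a-1\in(1,3)$ and $3-a\in(-1,1)$, we get $|f_{a,b}'(0)|>1$ and $|f_{a,b}'(p_0)|<1$, so $0$ is repelling and $p_0$ is attracting. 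This settles (i).

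For part (ii) the key idea is that $f_{a,b}$ is affinely conjugate to the standard logistic map, but in the \emph{useful} parameter range one must rescale rather than use the conjugacy $h$ constructed earlier (which produces $\mu=3-a\in(-1,1)$, outside the classical range $1<\mu<3$). Instead I would use the linear rescaling $x=\frac{a-1}{b}w$; a direct substitution turns $f_{a,b}$ into $G_r(w)=rw(1-w)$ with $r=a-1\in(1,3)$. Under this rescaling the interval $x\in(0,\frac{a-1}{b})$ corresponds exactly to $w\in(0,1)$, the repelling fixed point $0$ to $0$, and $p_0$ to $w^{\ast}=1-\frac1r=\frac{a-2}{a-1}$. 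Invoking the classical fact for the logistic family (the same fact used in the proof of Proposition \ref{zero}) that for $1<r<3$ every orbit in $(0,1)$ converges to $w^{\ast}$, and transporting this back through the rescaling, yields $\lim_{n\to\infty}f_{a,b}^n(x)=p_0$ for every $x\in(0,\frac{a-1}{b})$, which is (ii).

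The main obstacle is the logistic convergence statement itself, specifically its oscillatory regime. For $1<r\le2$ (that is $2<a\le3$) one has $w^{\ast}\le\frac12$, $G_r$ is increasing on $(0,\frac12]$, and convergence follows by a routine monotonicity argument (on $(0,w^{\ast})$ one checks $w<G_r(w)<w^{\ast}$, so the orbit increases to $w^{\ast}$, and near $w^{\ast}$ the map contracts since $|G_r'(w^{\ast})|=|2-r|<1$). For $2<r<3$ (that is $3<a<4$) the fixed point lies to the right of the critical point and orbits spiral around $w^{\ast}$, so monotonicity fails and the naive argument breaks down; here the clean route is to cite the theorem directly, or, for a self-contained proof, to analyze the second iterate $G_r^2$ and show $G_r$ has no $2$-cycle for $r<3$, which forces the even and odd subsequences to converge monotonically to $w^{\ast}$. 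I would first record forward-invariance of $(0,\frac{a-1}{b})$ (valid because the maximum value $\frac{(a-1)^2}{4b}$ of $f_{a,b}$ is $<\frac{a-1}{b}$ exactly when $a<5$) and then rely on the standard logistic result to handle both regimes uniformly.
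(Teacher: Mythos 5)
Your proof of (i) is the same as the paper's (solve $f_{a,b}(x)=x$, evaluate $f_{a,b}'(0)=a-1$ and $f_{a,b}'(p_0)=3-a$), but for (ii) you take a genuinely different and correct route. The paper does \emph{not} use any conjugacy in this proof: it reproduces Devaney's argument directly for $f_{a,b}$ by graphical analysis, splitting into the case $2<a<3$ (any orbit enters $\left(0,\frac{a-1}{2b}\right)$ after one step and then converges to $p_0$) and the case $3<a<4$ (introduce the preimage $\hat p_0\in\left(0,\frac{a-1}{2b}\right)$ of $p_0$, check that $f_{a,b}^2$ maps $[\hat p_0,p_0]$ into $\left[\frac{a-1}{2b},p_0\right]$, and piece together $\left(0,\frac{a-1}{b}\right)=(0,\hat p_0)\cup[\hat p_0,p_0]\cup\left(p_0,\frac{a-1}{b}\right)$). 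You instead rescale by $x=\frac{a-1}{b}w$, obtaining $G_r(w)=rw(1-w)$ with $r=a-1\in(1,3)$, and then quote the classical global convergence theorem for the logistic family (Devaney, Proposition 5.3). Your verification of the rescaling, the correspondence $p_0\leftrightarrow w^{\ast}=1-\frac1r$, and the invariance remark ($\frac{(a-1)^2}{4b}<\frac{a-1}{b}$ iff $a<5$) are all correct; so is your key observation that the paper's own conjugacy from its Proposition 2 gives $\mu=3-a\in(-1,1)$ for $2<a<4$, outside the range $1<\mu<3$ where the classical theorem applies, so that one must switch to the other affine conjugacy $r=a-1$ (the two multipliers at the fixed points sum to $2$, so both choices exist) -- this is presumably why the paper abandoned the conjugacy method here. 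What your route buys is rigor and economy: a single citation to a proved theorem replaces the paper's repeated appeals to ``graphical analysis shows,'' and your fallback sketch (monotone convergence for $1<r\le 2$, absence of $2$-cycles plus a Coppel-type argument for $2<r<3$) is a sound way to make it self-contained. What the paper's route buys is independence from the logistic theorem as a black box, at the cost of the informal graphical steps.
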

\begin{proof} (i). Let $f_{a,b}(x)=x(a-1-bx)$ with $a>2$.
Then it has two fixed points: $0$ and $p_0=\frac{a-2}{b}.$
We have $f'_{a,b}(0)=a-1$ and  $f'_{a,b}(p_0)=3-a$.
Hence 0 is a repelling for $a>2$ and $p_0$ is an
attracting with $2<a<4$.

(ii)  \emph{Case:} $2<a<3.$ Suppose $x\in (0, \frac{a-1}{2b}).$ Then graphical analysis
(which is called\emph{ Kyonigsa-Lamereya diagram}, see \cite{Shar}, page 7) shows
that $\lim_{n\to \infty} f_{a,b}^{n}(x)=p_0.$  If $x$ lies in the interval
$(\frac{a-1}{2b}, \frac{a-1}{b})$ then  $f_{a,b}(x)$ lies in $(0, \frac{a-1}{2b})$,
so that the previous argument implies (see Fig.\ref{f1})
$$f_{a,b}^{n}(x)=f_{a,b}^{n-1}(f_{a,b}(x))\rightarrow p_0, \ \ {\rm as} \ \ n\to\infty.$$
 \emph{Case:} $3<a<4.$ Graphical analysis shows what is different
 in this case (see Fig.\ref{f2}). Note that $\frac{a-1}{2b}<p_0<\frac{a-1}{b}.$
 Let $\hat{p}_0$ denote the unique point in the interval $(0,\frac{a-1}{2b})$
 that is mapped onto $p_0$ by $f_{a,b}.$ Then we can easily check that $f_{a,b}^2$
 maps the interval $[\hat{p}_0,p_0]$ inside $[\frac{a-1}{2b}, p_0].$
 It follows that $f_{a,b}^{n}(x)\rightarrow p_0$ as $n\rightarrow\infty$
 for all $x\in[\hat{p}_0,p_0].$ Now suppose $x<\hat{p}_0.$
 Again graphical analysis shows that there exists integer $k>0$
 such that $f_{a,b}^{k}(x)\in[\hat{p}_0,p_0].$
 Thus $f_{a,b}^{k+n}(x)\rightarrow p_0$ as $n\rightarrow\infty.$
 Similarly, $f_{a,b}$ maps the interval $(p_0, \frac{a-1}{b})$ onto $(0, p_0).$
 Since $(0,\frac{a-1}{b})=(0, \hat{p}_0)\cup[\hat{p}_0,p_0]\cup(p_0, \frac{a-1}{b})$,
 we have finished the proof.
\end{proof}

If $a>4$ then the fixed point $p_0$ becomes repelling.
Let us consider 2-periodical points of the function $f_{a,b}$ (see Fig.\ref{f3}) as roots of the equation:
$$\frac{f_{a,b}(f_{a,b}(x))-x}{f_{a,b}(x)-x}=0.$$
Then we have the following solutions:
$p_1=\frac{a-\sqrt{a(a-4)}}{2b}$, $p_2=\frac{a+\sqrt{a(a-4)}}{2b}.$
 We know that if $|f_{a,b}'(p_1)\cdot f_{a,b}'(p_2)|<1$ then the cycle $\{p_1,p_2\}$ is an attracting (see \cite{Shar}, page 9). Hence,
 $$f_{a,b}'(p_1)\cdot f_{a,b}'(p_2)=(a-1)^2-2b(a-1)(p_1+p_2)+4b^2p_1p_2=-a^2+4a+1,$$
then $|f_{a,b}'(p_1)\cdot f_{a,b}'(p_2)|<1$, if $4<a<2+\sqrt{6}\approx4.4494...$

If $a>2+\sqrt{6}$ then the cycle $\{p_1,p_2\}$ becomes repelling.
Numerical analysis shows that if $2+\sqrt{6}<a<4.543$  then there exists four periodical attracting cycle (see Fig.\ref{f4}).

\begin{pro}\label{oned} Let $x<\frac{a-1}{b}.$ Then

(i) If $4<a<2+\sqrt{6}\approx4.4494$ then the operator $f_{a,b}(x)=x(a-1-bx)$  has a cycle of period two;

(ii) If $2+\sqrt{6}<a<4.543$ then the operator $f_{a,b}(x)=x(a-1-bx)$  has a cycle of period four;

(iii) If $4.544<a<4.564$ then the operator $f_{a,b}(x)=x(a-1-bx)$  has a cycle of period eight;

(iv) If $a>3+\sqrt{5}$ then $f_{a,b}(x)=x(a-1-bx)$ has chaotic dynamics for any initial point $x^{0}\in (0,\frac{a-1}{b})\setminus\{p_0\}.$
\end{pro}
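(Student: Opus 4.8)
The plan is to reduce the entire statement to the classical dynamics of the quadratic (logistic) family $F_\mu(x)=\mu x(1-x)$, whose period-doubling cascade and onset of chaos are documented in \cite{De} and \cite{Shar}. To match the standard tabulation of the bifurcation values I would record the conjugacy in the orientation-preserving normalization: with $\phi(x)=\frac{b}{a-1}x$, a homeomorphism of $(0,\frac{a-1}{b})$ onto $(0,1)$, one checks directly that $\phi\circ f_{a,b}\circ\phi^{-1}=F_{a-1}$, so $f_{a,b}$ is topologically conjugate to $F_\mu$ with $\mu=a-1$ (a companion to the conjugacy of Proposition 2). Since a topological conjugacy carries periodic orbits to periodic orbits of the same period and stability and preserves chaoticity (as recalled after Proposition 2 and in \cite{De}, page 53), every assertion about $f_{a,b}$ becomes one about $F_{a-1}$. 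Under $\mu=a-1$ the thresholds in the statement line up with the standard ones: $a=4\leftrightarrow\mu=3$, $a=2+\sqrt6\leftrightarrow\mu=1+\sqrt6$, and $a=3+\sqrt5\leftrightarrow\mu=2+\sqrt5$.

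For (i) I would argue from the closed-form computation already prepared in the text. Dividing the equation $f_{a,b}(f_{a,b}(x))=x$ by the fixed-point factor $f_{a,b}(x)-x$ yields the two-cycle $p_{1,2}=\frac{a\mp\sqrt{a(a-4)}}{2b}$, and these are real and distinct precisely when $a(a-4)>0$, i.e. $a>4$, which is where the $2$-cycle is born. Its stability is governed by the cycle multiplier $f_{a,b}'(p_1)\,f_{a,b}'(p_2)=-a^2+4a+1$ computed above, and the inequality $|-a^2+4a+1|<1$ holds exactly for $4<a<2+\sqrt6$. This settles (i) rigorously and in closed form.

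Parts (ii) and (iii) I would obtain the same way, transporting the period-$4$ and period-$8$ windows of the logistic family through the conjugacy. The left endpoint of the period-$4$ window is the closed-form period-doubling value $\mu=1+\sqrt6$, i.e. $a=2+\sqrt6$, where the $2$-cycle multiplier passes through $-1$ and an attracting $4$-cycle appears. The remaining thresholds — the period-$4\to8$ bifurcation near $\mu\approx3.544$ and the period-$8\to16$ bifurcation near $\mu\approx3.564$ — are higher rungs of the Feigenbaum cascade: they solve the neutral-stability conditions $\big(F_\mu^{2^k}\big)'=-1$ along the corresponding cycle and admit no elementary closed form. Here lies the main obstacle: the right-hand endpoints $a\approx4.543$ and $a\approx4.564$ (with the period-$8$ window opening at $a\approx4.544$) are determined only numerically, so the honest content of (ii) and (iii) is ``by conjugacy to $F_{a-1}$ together with the numerically located bifurcation values of the logistic family,'' exactly as the numerical remarks preceding the statement indicate.

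Finally, for (iv) I would invoke the symbolic-dynamics theorem for the quadratic map from \cite{De}: for $\mu>2+\sqrt5$ the set $\Lambda_\mu=\{u:\ F_\mu^{\,n}(u)\in[0,1]\ \forall n\ge0\}$ is an invariant Cantor set on which $F_\mu$ is topologically conjugate to the one-sided shift on two symbols, and the latter is chaotic in the sense of Definition \ref{chaos} (sensitive dependence, topological transitivity, dense periodic points). The hyperbolicity threshold is sharp: on the two surviving branches $|F_\mu'|\ge\sqrt{\mu^2-4\mu}$, which exceeds $1$ exactly when $\mu>2+\sqrt5$. Pulling this back through $\phi$ with $\mu=a-1>2+\sqrt5$, i.e. $a>3+\sqrt5$, shows that $f_{a,b}$ is chaotic on the corresponding invariant Cantor subset of $(0,\frac{a-1}{b})$; this is the precise reading to attach to the phrase ``chaotic dynamics for any initial point $x^{0}\in(0,\frac{a-1}{b})\setminus\{p_0\}$'' in the statement.
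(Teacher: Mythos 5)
Your proposal is correct, and its skeleton coincides with the paper's: part (i) rests on the explicit two-cycle $p_{1,2}=\frac{a\mp\sqrt{a(a-4)}}{2b}$ and the multiplier $f_{a,b}'(p_1)f_{a,b}'(p_2)=-a^2+4a+1$ (exactly the computation the paper carries out just before the proposition), parts (ii)--(iii) rest on numerically located period-doubling values, and part (iv) rests on Devaney's symbolic-dynamics argument (the paper simply cites \cite{De}, pp.~50--51). Where you genuinely diverge is the systematic reduction through the orientation-preserving conjugacy $\phi(x)=\frac{b}{a-1}x$, giving $\mu=a-1$: the paper's own conjugacy (its Proposition 2) is the affine one with $\mu=3-a$, which for $a>4$ yields $\mu<-1$, outside the standard logistic parameter range, so the paper cannot and does not transport textbook logistic results; instead it reproves (i) by hand on $f_{a,b}$ and appeals to graphical/numerical analysis for the rest. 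Your normalization buys a clean dictionary ($a=4\leftrightarrow\mu=3$, $a=2+\sqrt6\leftrightarrow\mu=1+\sqrt6$, $a=3+\sqrt5\leftrightarrow\mu=2+\sqrt5$) under which every claim becomes a quotable fact about $F_\mu$. Two further points in your favor: you state honestly that the endpoints $4.543$, $4.544$, $4.564$ in (ii)--(iii) have no closed form and are only numerical, which is also the true status of the paper's proof (it says ``numerical analysis shows''); and your reading of (iv) as chaos on an invariant Cantor set is the correct repair of the paper's literal wording, since for $a>3+\sqrt5>5$ the interval $\bigl(0,\frac{a-1}{b}\bigr)$ is not invariant under $f_{a,b}$ (its maximum $\frac{(a-1)^2}{4b}$ exceeds $\frac{a-1}{b}$), so typical orbits escape and the claim ``chaotic dynamics for any initial point'' cannot hold as stated.
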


\begin{proof} The proof of (i), (ii) and (iii) follows from above mentioned discussion. Proof of the (iv) follows by the arguments of \cite{De}, pages 50-51.
\end{proof}
\begin{figure}[h!]
\begin{multicols}{2}
\hfill
\includegraphics[width=5cm]{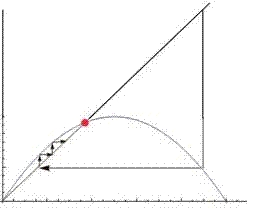}
\hfill
\caption{Graphical analysis of $f_{a,b}(x)=x(a-1-bx)$ when $2<a<3.$}\label{f1}
\label{figLeft}
\hfill
\includegraphics[width=5.6cm]{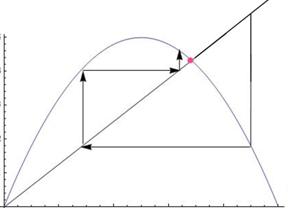}
\hfill
\caption{Graphical analysis of $f_{a,b}(x)=x(a-1-bx)$ when $3<a<4.$}\label{f2}
\label{figRight}
\end{multicols}
\end{figure}

\begin{figure}[h!]
\begin{multicols}{2}
\hfill
\includegraphics[width=6cm]{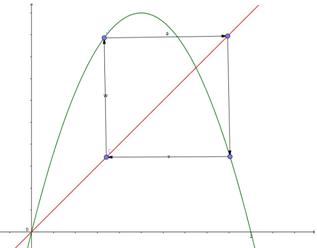}
\hfill
\caption{ Two periodical graphics of $f_{a,b}(x)=x(a-1-bx)$ when $a>4.$}\label{f3}
\label{figLeft}
\hfill
\includegraphics[width=6cm]{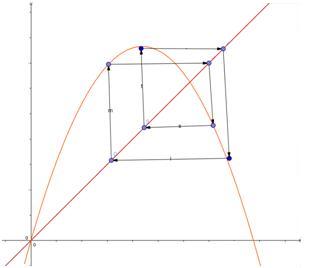}
\hfill
\caption{Four periodical graphics of $f_{a,b}(x)=x(a-1-bx)$ when $a>4.$}\label{f4}
\label{figRight}
\end{multicols}
\end{figure}
\subsection{On the invariant set $M_2$}
\begin{thm} Let $(x^{(0)}, y^{(0)})\in M_{2}$ be an initial point and $1<d\leq2.$

(i) If $1<a\leq 2 $ then

$$\lim_{n\to \infty} (x^{(n)}, y^{(n)})=(0,0)$$

(ii)  If $2<a<4$ then there exists a neighborhood $U(\lambda_1)$
such that
$$\lim_{n\to \infty} (x^{(n)}, y^{(n)})=\lambda_1=(\frac{a-2}{b},0), \ \ \forall (x^{(0)}, y^{(0)})\in U(\lambda_1).$$
\end{thm}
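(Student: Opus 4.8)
The plan is to prove (i) by establishing monotone convergence of both coordinate sequences and identifying their common limit through the fixed-point equation, and to prove (ii) by the local stability theory of the sink $\lambda_1$ together with a direct argument at the nonhyperbolic boundary.

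\emph{Part (i).} On the invariant set $M_2$ with $1<a\le 2$ and $1<d\le 2$, the estimates obtained in the proof of Proposition \ref{inv} (Case-1) give
$$x^{(n+1)}=x^{(n)}\big(a-1-bx^{(n)}-cy^{(n)}\big)<x^{(n)},\qquad y^{(n+1)}\le (d-1)\,y^{(n)}\le y^{(n)},$$
so $(x^{(n)})$ is strictly decreasing and $(y^{(n)})$ is non-increasing; both remain in $M_2$, are bounded below by $0$, and hence converge, say $x^{(n)}\to x^*\ge 0$ and $y^{(n)}\to y^*\ge 0$. Passing to the limit in the first coordinate yields $x^*=x^*(a-1-bx^*-cy^*)$. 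If $x^*>0$ this forces $bx^*+cy^*=a-2\le 0$, impossible since $b,c>0$, $x^*>0$, $y^*\ge 0$; therefore $x^*=0$. Finally, every iterate lies in $M_2$, so $0\le y^{(n)}\le \tfrac{d-1}{\alpha}\,x^{(n)}$, and the squeeze theorem gives $y^{(n)}\to 0$, proving (i).

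\emph{Part (ii), hyperbolic case $1<d<2$.} Here I would invoke Proposition \ref{pr}: for $2<a<4$ and $1<d<2$ the Jacobian $\textbf{J}(\lambda_1)$ is upper triangular with eigenvalues $\nu_1=3-a$ and $\nu_2=d-1$, both of modulus strictly less than $1$, so $\lambda_1$ is a hyperbolic attracting fixed point. By the standard local stability theorem for hyperbolic sinks there is a neighborhood $U(\lambda_1)$ on which $V^n\to\lambda_1$; replacing $U(\lambda_1)$ by $U(\lambda_1)\cap M_2$ if necessary yields the claim for initial data in $M_2$.

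\emph{Part (ii), boundary case $d=2$.} This is the main obstacle, since then $\nu_2=d-1=1$, $\lambda_1$ is nonhyperbolic, and linearization no longer applies. I would shrink $U(\lambda_1)$ so that along the trajectory $x^{(n)}$ stays in an interval $[\delta,M]$ with $\delta>0$. When $d=2$ the second coordinate obeys $y^{(n+1)}=y^{(n)}\big(1-\alpha y^{(n)}/x^{(n)}\big)\le y^{(n)}$, so $(y^{(n)})$ decreases to some $y^*\ge 0$; if $y^*>0$ then $\alpha y^{(n)}/x^{(n)}\ge \alpha y^*/(2M)>0$ for large $n$, forcing geometric decay of $y^{(n)}$ and contradicting $y^*>0$, whence $y^{(n)}\to 0$. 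Once $y^{(n)}\to 0$, the first coordinate is an asymptotically vanishing perturbation of the one-dimensional map $f_{a,b}$, whose fixed point $p_0=\tfrac{a-2}{b}$ is attracting for $2<a<4$ by Proposition \ref{pro1}; the delicate point is to show that the small coupling term $cy^{(n)}$ in the $x$-equation does not destroy the convergence $x^{(n)}\to p_0$ inherited from $f_{a,b}$. Controlling this term by its smallness and using the robustness of the attraction of $p_0$ then gives $x^{(n)}\to \tfrac{a-2}{b}$, completing (ii). Every other step reduces to the already-established invariance, one-dimensional, and linear-stability results.
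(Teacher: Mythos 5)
Your part (i) is correct and in fact cleaner than the paper's own argument. The paper splits into subcases: for $1<d<2$ it uses the geometric estimate $y^{(n+1)}<y^{(0)}(d-1)^n$, while for $d=2$ it argues by contradiction that a nonzero limit $\bar y$ of the decreasing sequence $y^{(n)}$ would force $x^{(n)}=\alpha y^{(n)}\big/\bigl(1-y^{(n+1)}/y^{(n)}\bigr)\to\infty$, contradicting boundedness in $M_2$; similarly for the first coordinate it treats $1<a<2$ (geometric decay) and $a=2$ (the monotone limit must be a fixed point, and none exists) separately. Your single argument --- monotone convergence of both coordinates, passing to the limit in the $x$-equation to get $bx^*+cy^*=a-2\le 0$ whenever $x^*>0$ (impossible since $b,c>0$), then the squeeze $0\le y^{(n)}\le\frac{d-1}{\alpha}x^{(n)}$ coming from membership in $M_2$ --- handles all four subcases at once. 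It does rely on the trajectory staying in $M_2$, i.e.\ on Proposition \ref{inv}, condition (1), which you legitimately have since $1<a\le 2$ and $1<d\le 2$. Your part (ii) for $1<d<2$ is essentially the paper's proof: eigenvalues $3-a$ and $d-1$ both of modulus less than one, hence a hyperbolic sink, hence local convergence by the standard theorem.

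The genuine gap is the case $d=2$ in part (ii); you correctly identified it but did not close it. Your sketch is circular: you ``shrink $U(\lambda_1)$ so that along the trajectory $x^{(n)}$ stays in $[\delta,M]$,'' but keeping the entire forward orbit inside a fixed window around $\lambda_1$ is precisely the stability assertion to be proved, and for a nonhyperbolic fixed point it does not follow from merely shrinking the initial neighborhood. Likewise, your closing step (``controlling this term by its smallness and using the robustness of the attraction of $p_0$'') names the difficulty rather than resolving it; one would need a center-manifold argument or a quantitative estimate showing that the contraction at $p_0$ survives a decaying, non-summable perturbation $cy^{(n)}$ (note that when $d=2$ the decay of $y^{(n)}$ near $\lambda_1$ is only algebraic, not geometric). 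You should know, however, that the paper's own proof is no better at this point: it invokes Proposition \ref{pr} to assert that $\lambda_1$ is attractive, but by that very proposition $\lambda_1$ is nonhyperbolic when $d=2$, so the citation covers only $1<d<2$ and the boundary case is silently left unproved. In short, your proposal matches or improves on the paper everywhere the paper is rigorous, and the one place where it is incomplete is a gap in the published proof as well.
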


\begin{proof} For  $1<d<2$ we get  $y^{(n+1)}< y^{(0)}(d-1)^n$ and from this
$$\lim_{n\to \infty} y^{(n)}=0$$
If  $d=2$ then $y^{(n+1)}= y^{(n)}(1-\alpha\frac{y^{(n)}}{x^{(n)}})< y^{(n)}$. Thus, $y^{(n)}$ is decreasing and it has a limit.
We assume that
$$\mathop{\lim }\limits_{n\to \infty} y^{(n)}=\bar{y}\neq0.$$
From second equation of the operator $V$ we have:
$$x^{(n)}=\frac{\alpha y^{(n)}}{1-\frac{y^{(n+1)}}{y^{(n)}}}.$$
From this we obtain  $\mathop{\lim }\limits_{n\to \infty} x^{(n)}=\infty.$ This is a contradiction to
the boundedness of the sequence $x^{(n)}$ in the invariant set $M_2.$
Hence, $$\mathop{\lim }\limits_{n\to \infty} y^{(n)}=0.$$

\emph{Case-(i).} If $1<a<2$ we get  $x^{(n+1)}<x^{(0)}(a-1)^n$ and
$\lim_{n\to \infty} x^{(n)}=0.$

If $a=2$ then $x^{(1)}=x^{(0)}(1-bx^{(0)}-cy^{(0)}).$ It means than the sequence $x^{(n)}$
monotone decreasing and it has limit $\bar{x}.$ If we assume that  $\bar{x}\neq0$ then it
must be a fixed point. But in this case there is no fixed point of the operator $V.$ Thus, $\bar{x}=0.$

\emph{Case-(ii).} Above we have shown that independently on $a\in(1,4)$ the sequence $y^{(n)}$ has zero limit and $x^{(n)}\rightarrow0$ if $1<a\leq2.$

Let now $2<a<4.$
First equation of the operator $V$ is:
\begin{equation}\label{ss}
x^{(n+1)}=x^{(n)}(a-1-bx^{(n)}-cy^{(n)})
\end{equation}
For $y^{(0)}=0$ by Proposition \ref{pro1} we get that the limit of the sequence $x^{(n)}$ is $\frac{a-2}{b}$.

If $y^{(0)}>0$ then by Proposition \ref{pr} the fixed point $\lambda_1$ is attractive, and
therefore by the general theory of dynamical systems \cite{De},
there exists its neighborhood $U(\lambda_1)$
such that for any $(x^{(0)}, y^{(0)})\in U(\lambda_1)$ we have
$$\lim_{n\to \infty} (x^{(n)}, y^{(n)})=\lambda_1.$$
(see Fig.\ref{f5} for an illustration).
The theorem is proved.
\end{proof}

\begin{figure}[h!]
\includegraphics[width=10cm]{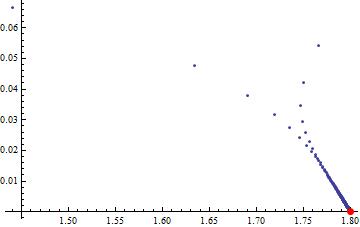}
\caption{a=3.8, b=1, c=2, d=2, $\alpha=4, x^{(0)}=1.2, y^{(0)}=0.2$. Shown $\lambda^{(n)}=(x^{(n)}, y^{(n)})$,  $n=0,1,\dots, 10000$ }\label{f5}
\end{figure}

\begin{thm} Let $(x^{(0)}, y^{(0)})\in M_{2}$ be an initial point which is not fixed points and let $1<d\leq2$. Then

(i) If $4<a<2+\sqrt{6}\approx4.4494$ then the operator $V^n(x^{(0)}, y^{(0)})$  converges to a cycle of period two;

(ii) If $2+\sqrt{6}<a<4.543$ then the operator $V^n(x^{(0)}, y^{(0)})$  converges to a cycle of period four;

(iii) If $4.544<a<4.564$ then the operator $V^n(x^{(0)}, y^{(0)})$  converges to a cycle of period eight;

(iv) If $a>3+\sqrt{5}$ then $V$ has chaotic dynamics.
\end{thm}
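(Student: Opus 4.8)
The plan is to reduce the two-dimensional dynamics on $M_2$ to the one-dimensional dynamics of $f_{a,b}(x)=x(a-1-bx)$ analyzed in Proposition \ref{oned}, exploiting the fact that under the hypothesis $1<d\leq 2$ the predator coordinate decays to zero. First I would recall from the proof of the preceding theorem that for $1<d\leq 2$ and any $(x^{(0)},y^{(0)})\in M_2$ one has $\lim_{n\to\infty}y^{(n)}=0$, a fact that holds independently of the value of $a$. For $1<d<2$ this follows from the bound $y^{(n+1)}<y^{(0)}(d-1)^n$; for $d=2$ the sequence $y^{(n)}$ is strictly decreasing and bounded below, and a nonzero limit would force $x^{(n)}\to\infty$, contradicting the boundedness of $x^{(n)}$ on $M_2$.

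Once $y^{(n)}\to 0$, I would rewrite the first equation as $x^{(n+1)}=f_{a,b}(x^{(n)})-cx^{(n)}y^{(n)}$ and observe that, since $x^{(n)}$ is bounded on $M_2$ while $y^{(n)}\to 0$, the perturbation term $cx^{(n)}y^{(n)}$ tends to $0$. Thus $\{x^{(n)}\}$ is the orbit of an asymptotically autonomous recursion whose limiting map is exactly $f_{a,b}$.

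For cases (i)--(iii), Proposition \ref{oned} provides a hyperbolic attracting cycle of $f_{a,b}$ of period $2$, $4$, $8$ respectively. Because such a cycle is hyperbolic, it persists and remains attracting under the vanishing perturbation, so I would argue that the $\omega$-limit set of $\{x^{(n)}\}$ is forced into this cycle; correspondingly $(x^{(n)},y^{(n)})$ converges to the associated periodic orbit lying on the invariant axis $M_1$ (with $y=0$). For case (iv), Proposition \ref{oned}(iv) shows that $f_{a,b}$ is chaotic on $\left(0,\frac{a-1}{b}\right)\setminus\{p_0\}$, and the boundary dynamics of $V$ on the invariant set $M_1$ inherits this chaotic behavior as $y\to 0$.

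The main obstacle is the transfer step above: rigorously passing from the asymptotic behavior of the genuine (non-autonomous, perturbed) sequence $x^{(n)}$ to that of the autonomous limit map $f_{a,b}$. For the hyperbolic attracting cycles this can be carried out by a standard stability argument for asymptotically autonomous systems, but one must control the possibility that the orbit is repeatedly pushed away from the cycle while the perturbation $cx^{(n)}y^{(n)}$ is still appreciable; quantitatively this requires comparing the decay rate of $y^{(n)}$ with the contraction rate of the cycle. The chaotic case (iv) is more delicate, since chaos is not robust under perturbation in the same manner, and the assertion is best understood as describing the limiting dynamics restricted to $M_1$.
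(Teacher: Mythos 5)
Your proposal follows essentially the same route as the paper: under $1<d\leq2$ the predator coordinate satisfies $y^{(n)}\to 0$, so the dynamics collapse onto the prey axis and the conclusion is read off from Proposition \ref{oned} on the one-dimensional map $f_{a,b}$. In fact you are more careful than the paper itself, whose entire proof is the assertion that after some $n_0$ the operators $V$ and $f_{a,b}$ ``have the same limit behavior''; the asymptotically-autonomous perturbation step you single out as the main obstacle (hyperbolic persistence of the attracting cycles in cases (i)--(iii), and the genuinely delicate chaotic case (iv)) is precisely what the paper leaves unjustified.
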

\begin{proof}By the condition $1<d\leq2$ we have that $\mathop{\lim }\limits_{n\to \infty} y^{(n)}=0.$ It means that for any initial point there exists $n_0\in N$ such that the operators $V$ and $f_{a,b}$ have the same limit behavior. Hence, proof of this theorem follows from Proposition \ref{oned}.\end{proof}

Let us give some figures related to this Theorem:
If $4<a<2+\sqrt{6}$ then by two periodical points $p_1=\frac{a-\sqrt{a(a-4)}}{2b}$, $p_2=\frac{a+\sqrt{a(a-4)}}{2b}$ which are mentioned in above, we have two attracting fixed points $(p_1,0), (p_2,0)$ of period 2. For example, if $a=4.3, b=1$ then $(p_1,0)\approx(1.5821,0)$ and $(p_2,0)\approx(2.71789,0)$ (Fig. \ref{f6}). Similarly, for the $2+\sqrt{6}<a<4.543$, for example, $a=4.5$, we can see the behavior of the trajectory with respect to the attracting cycle of period four (Fig.\ref{f7}). In addition, represented 8, 16 and greater periodical fixed points. (Fig. \ref{8-1}, Fig. \ref{16-1}, Fig. \ref{f8}, Fig. \ref{f9}).

\begin{figure}[h!]
\begin{multicols}{2}
\hfill
\includegraphics[width=7cm]{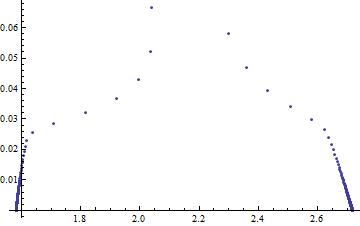}
\hfill
\caption{Two periodical: a=4.3, b=1, c=2, d=2, $\alpha=4, x^{(0)}=1.2, y^{(0)}=0.2$. Shown $\lambda^{(n)}$,  $n=0,1,\dots, 100000.$}\label{f6}
\label{figLeft}
\hfill
\includegraphics[width=7cm]{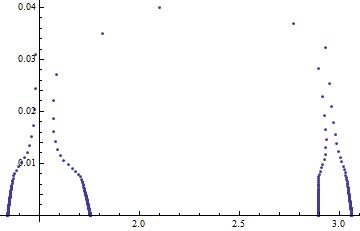}
\hfill
\caption{Four periodical: a=4.5, b=1, c=2, d=2, $\alpha=4, x^{(0)}=1.2, y^{(0)}=0.2$. Shown $\lambda^{(n)}$,  $n=0,1,\dots, 100000.$}\label{f7}
\label{figRight}
\end{multicols}
\end{figure}

\begin{figure}
\begin{multicols}{2}
\hfill
\includegraphics[width=7cm]{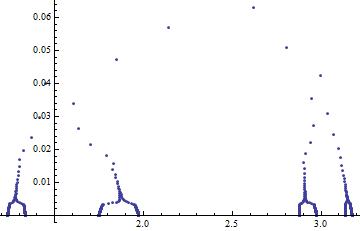}
\hfill
\caption{Eight periodical: a=4.564, b=1, c=2, d=2, $\alpha=4, x^{(0)}=1.2, y^{(0)}=0.2$. Shown $\lambda^{(n)}$,  $n=0,1,\dots, 100000.$}\label{8-1}
\label{figLeft}
\hfill
\includegraphics[width=7cm]{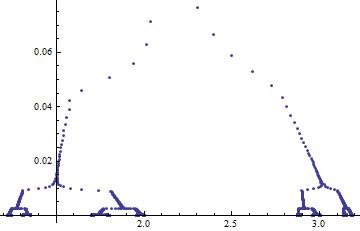}
\hfill
\caption{Sixteen periodical: a=4.569, b=1, c=5, d=2, $\alpha=42, x^{(0)}=1.2, y^{(0)}=0.2$. Shown $\lambda^{(n)}$,  $n=0,1, \dots, 100000.$}\label{16-1}
\label{figRight}
\end{multicols}
\end{figure}

\begin{figure}[h!]
\begin{multicols}{2}
\hfill
\includegraphics[width=7cm]{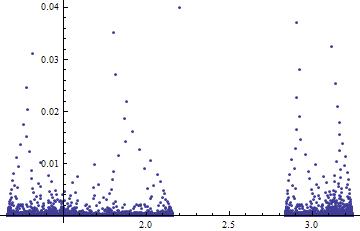}
\hfill
\caption{a=4.6, b=1, c=2, d=2, $\alpha=4, x^{(0)}=1, y^{(0)}=0.2.$ Shown $\lambda^{(n)}$,  $n=0,1,\dots, 100000.$}\label{f8}
\label{figLeft}
\hfill
\includegraphics[width=7cm]{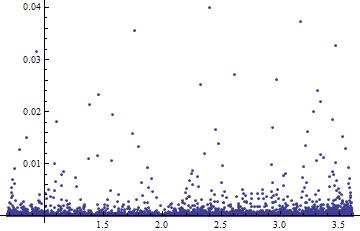}
\hfill
\caption{a=4.8, b=1, c=2, d=2, $\alpha=4, x^{(0)}=1, y^{(0)}=0.2.$  Shown $\lambda^{(n)}$,  $n=0,1,\dots, 100000.$}\label{f9}
\label{figRight}
\end{multicols}
\end{figure}

\section{Case $d>2, a>2$}
In this case we have several interesting cases, in particular chaos.
Numerical analysis shows that the coordinates of the vector $\lambda^{(n)}$ are not monotone, so it is not easy to see the limit properties of the trajectory.
Therefore, we study these limits numerically for concrete values of parameters:
(In all Figures the red point is the fixed point $\lambda_2$)

\subsection{Numerical analysis.}

1) $a=3, b=2, c=5, d=4, \alpha=1.$ Then by the system (\ref{discr}) we get
\begin{equation}\label{d1}
\begin{cases}
x^{(n+1)}=2x^{(n)}-2(x^{(n)})^2-5x^{(n)}y^{(n)}\\[2mm]
y^{(n+1)}=3y^{(n)}-\frac{(y^{(n)})^2}{x^{(n)}}
\end{cases}
\end{equation}
For this system $\lambda_2=(\frac{1}{12},\frac{2}{12})=(0.0833, 0.1666).$
Here we choose the initial point with condition $x^{(1)}>0, y^{(1)}\geq0$.
By using Wolfram Mathematica 7.0 we find limit points of initial
point $x^{(0)}=0.1, y^{(0)}=0.2$ (Fig.\ref{f10}).

\begin{figure}[h!]
\begin{multicols}{2}
\hfill
\includegraphics[width=7cm]{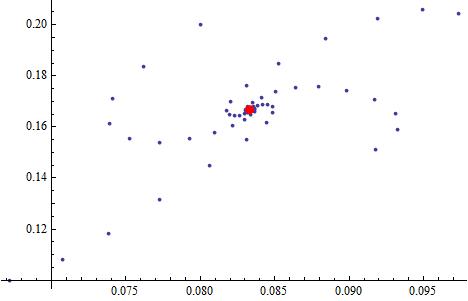}
\hfill
\caption{a=3, b=2, c=5, d=4, $\alpha=1, x^{(0)}=0.1, y^{(0)}=0.2.$  Shown $\lambda^{(n)}$,  $n=0,1,\dots, 2000.$}\label{f10}
\label{figLeft}
\hfill
\includegraphics[width=7cm]{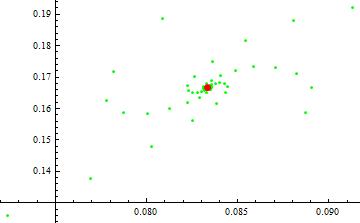}
\hfill
\caption{a=3, b=2, c=5, d=4, $\alpha=1, x^{(0)}=0.09514, y^{(0)}=0.1919.$  Shown $\lambda^{(n)}$,  $n=0,1,\dots, 2000.$}\label{f10-1}
\label{figRight}
\end{multicols}
\end{figure}

\begin{figure}[h!]
\begin{multicols}{2}
\hfill
\includegraphics[width=7cm]{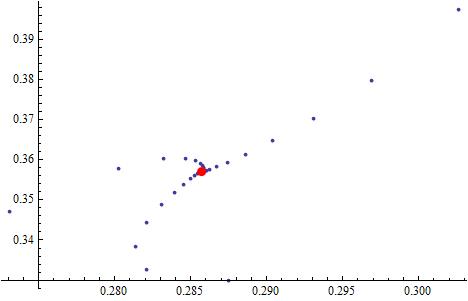}
\hfill
\caption{a=3, b=1, c=2, d=4.5, $\alpha=2, x^{(0)}=0.25, y^{(0)}=0.3.$  Shown $\lambda^{(n)}$,  $n=0,1,\dots, 2000.$}\label{f11}
\label{figLeft}
\hfill
\includegraphics[width=7cm]{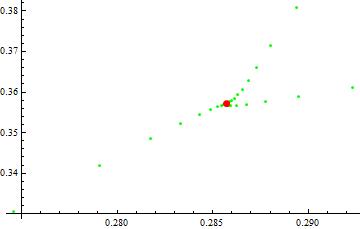}
\hfill
\caption{a=3, b=1, c=2, d=4.5, $\alpha=2, x^{(0)}=0.2967, y^{(0)}=0.364.$  Shown $\lambda^{(n)}$,  $n=0,1,\dots, 2000.$}\label{f11-1}
\label{figRight}
\end{multicols}
\end{figure}

\begin{figure}[h!]
\begin{multicols}{2}
\hfill
\includegraphics[width=7cm]{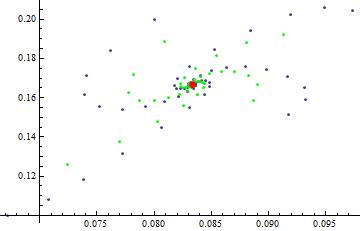}
\hfill
\caption{Fig.\ref{f10} and Fig.\ref{f10-1} in one system of coordinates}\label{c1}
\label{figLeft}
\hfill
\includegraphics[width=7cm]{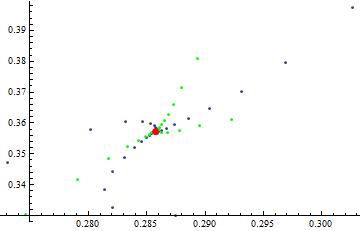}
\hfill
\caption{Fig.\ref{f11} and Fig.\ref{f11-1} in one system of coordinates}\label{c2}
\label{figRight}
\end{multicols}
\end{figure}

2) $a=3, b=1, c=2, d=4.5, \alpha=2.$ Then $\lambda_2=(\frac{2}{7},\frac{5}{14})=(0.2857, 0.3571)$ after $n=20000$ iteration we have $(x^{(n)},y^{(n)})=({0.285714, 0.357143})$  with $x^{(0)}=0.25, y^{(0)}=0.3$. (Fig.\ref{f11})
We see that in some subcases of the system (\ref{discr}) limit is:
$$\lim_{n\to \infty} (x^{(n)}, y^{(n)})=\lambda_2=\left(\frac{(a-2)\alpha}{b\alpha+c(d-2)}, \frac{(a-2)(d-2)}{b\alpha+c(d-2)}\right).$$
But for some cases the behavior of the  trajectory is various (Fig.\ref{f121}- Fig.\ref{f132}).

3) For $a=3.7, b=2, c=1, d=3.9, \alpha=3$ the trajectory is given in Fig.\ref{f121} and Fig.\ref{f122}

4) For $a=3.7, b=2, c=2, d=3.6, \alpha=3$ the trajectory is given in Fig.\ref{f131} and Fig.\ref{f132}
From the Fig.\ref{f131} and Fig.\ref{f132} we can say that in this case there is an invariant domain which is bounded by closed and attracting invariant curve.
\begin{figure}[h!]
\begin{multicols}{2}
\hfill
\includegraphics[width=7cm]{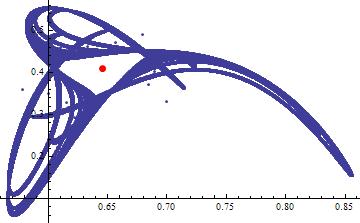}
\hfill
\caption{a=3.7, b=2, c=1, d=3.9, $\alpha=3, x^{(0)}=0.5, y^{(0)}=0.3.$ Shown $\lambda^{(n)}$,  $n=0,1,\dots, 100000.$}\label{f121}
\label{figLeft}
\hfill
\includegraphics[width=7cm]{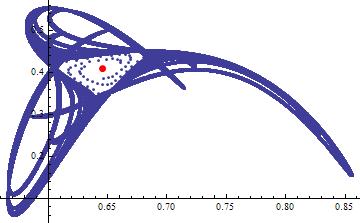}
\hfill
\caption{a=3.7, b=2, c=1, d=3.6, $\alpha=3, x^{(0)}=0.6431, y^{(0)}=0.3857.$ Shown $\lambda^{(n)}$,  $n=0,1,\dots, 100000.$}\label{f122}
\label{figRight}
\end{multicols}
\end{figure}

\begin{figure}[h!]
\begin{multicols}{2}
\hfill
\includegraphics[width=7cm]{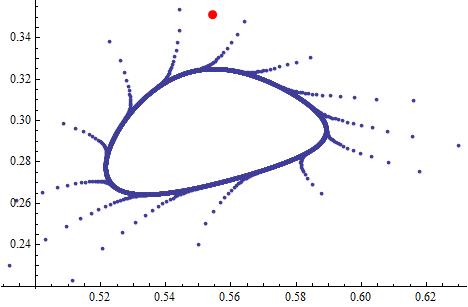}
\hfill
\caption{a=3.7, b=2, c=2, d=3.6, $\alpha=3, x^{(0)}=0.5, y^{(0)}=0.3.$ Shown $\lambda^{(n)}$,  $n=0,1,\dots, 100000.$}\label{f131}
\label{figLeft}
\hfill
\includegraphics[width=7cm]{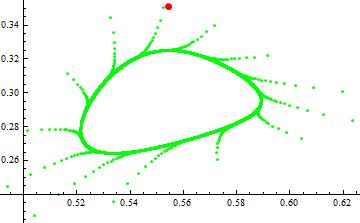}
\hfill
\caption{a=3.7, b=2, c=2, d=3.6, $\alpha=3, x^{(0)}=0.512, y^{(0)}=0.3168.$ Shown $\lambda^{(n)}$,  $n=0,1,\dots, 100000.$}\label{f131-1}
\label{figRight}
\end{multicols}
\end{figure}

\begin{figure}[h!]
\includegraphics[width=7cm]{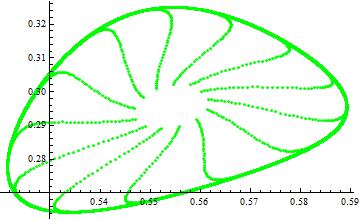}
\caption{a=3.7, b=2, c=2, d=3.6, $\alpha=3, x^{(0)}=0.5559, y^{(0)}=0.2901.$ Shown $\lambda^{(n)}$,  $n=0,1,\dots,100000.$}\label{f132}
\end{figure}

\begin{figure}[h!]
\begin{multicols}{2}
\hfill
\includegraphics[width=7cm]{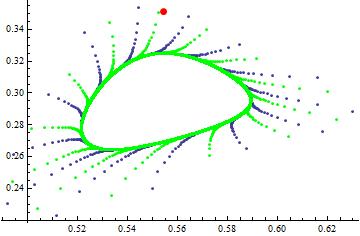}
\hfill
\caption{Fig.\ref{f131} and Fig.\ref{f131-1} in one system of coordinates}\label{c3}
\label{figLeft}
\hfill
\includegraphics[width=7cm]{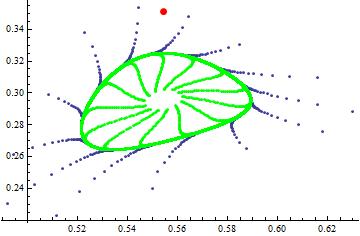}
\hfill
\caption{Fig.\ref{f131} and Fig.\ref{f132} in one system of coordinates}\label{c4}
\label{figRight}
\end{multicols}
\end{figure}

5) For $a=3.9, b=1, c=1, d=3.1, \alpha=1, x^{(0)}=0.1, y^{(0)}=0.01$ the trajectory is given in Fig.\ref{171}

6) For $a=4.4, b=1.3, c=1.1, d=3.1, \alpha=1, x^{(0)}=0.1, y^{(0)}=0.01$ the trajectory is given in Fig.\ref{172}

7) For $a=4.3, b=1, c=1, d=3.1, \alpha=1, x^{(0)}=0.1, y^{(0)}=0.01$ the trajectory is given in Fig.\ref{173}

8) For $a=4.4, b=1, c=1, d=3.1, \alpha=1, x^{(0)}=0.1, y^{(0)}=0.01$ the trajectory is given in Fig.\ref{174}

 \begin{figure}[h!]
\begin{multicols}{2}
\hfill
\includegraphics[width=7cm]{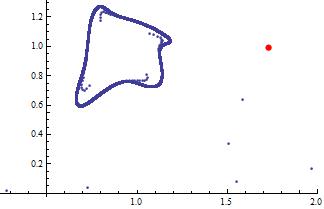}
\hfill
\caption{$a=3.9, b=1, c=1, d=3.1, \alpha=1, x^{(0)}=0.1, y^{(0)}=0.01$}\label{171}
\label{figLeft}
\hfill
\includegraphics[width=7cm]{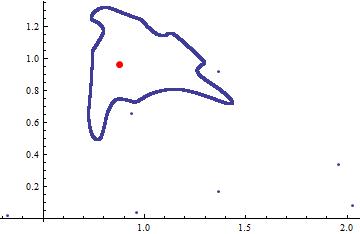}
\hfill
\caption{$a=4.4, b=1.3, c=1.1, d=3.1, \alpha=1, x^{(0)}=0.1, y^{(0)}=0.01$}\label{172}
\label{figRight}
\end{multicols}
\end{figure}

 \begin{figure}[h!]
\begin{multicols}{2}
\hfill
\includegraphics[width=7cm]{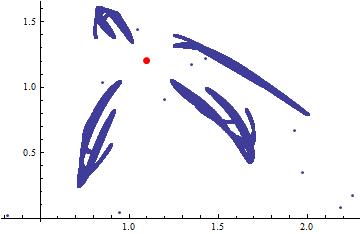}
\hfill
\caption{$a=4.3, b=1, c=1, d=3.1, \alpha=1, x^{(0)}=0.1, y^{(0)}=0.01$}\label{173}
\label{figLeft}
\hfill
\includegraphics[width=7cm]{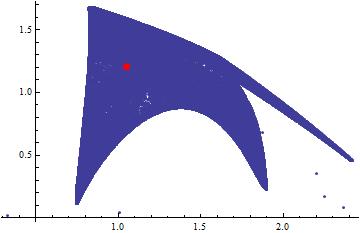}
\hfill
\caption{$a=4.4, b=1, c=1, d=3.1, \alpha=1, x^{(0)}=0.1, y^{(0)}=0.01$}\label{174}
\label{figRight}
\end{multicols}
\end{figure}

 \subsection{Lyapunov exponents}

It is known that the Lyapunov
exponents  describe  the  behavior  of  vectors  in  the tangent space of the phase space and are defined from the Jacobian matrix, (\cite{Arar}, \cite{Arro}, \cite{Luca}, \cite{Wu}).

 Lyapunov exponent  is calculated by eigen values of the  limit  of  the  following  expression:
$$(J_0J_1...J_n)^{\frac{1}{n}}$$
where $n$ tends to infinity, and $J_i$ is the Jacobian of the function at the iterated point $(x_i,y_i)$.  For  the  evaluation  of  Lyapunov  exponent,  we have taken an initial point and iterated it say for 106 time so that we are  closer to the fixed  point.   We  find   $J_0J_1...J_n$  where  $n=106$  say   and  calculate  the  Eigenvalues  of  that resultant matrix. Then $\lambda=\frac{\ln(eigenvalue)}{n}$ is the  Lyapunov exponent.

For the system (\ref{discr}) we consider the case $a=3.9, b=2, c=2, d=3.6, \alpha=3, x^{(0)}=0.5, y^{(0)}=0.4 $ and we calculate the Lyapunov exponent. The Jacobian is:

$$
J(x,y)=\begin{bmatrix}
2.9-4x-2y & -2x\\
3\frac{y^2}{x^2} & 2.6-6\frac{y}{x}
\end{bmatrix}$$

\begin{figure}[h!]
\begin{multicols}{2}
\hfill
\includegraphics[width=7cm]{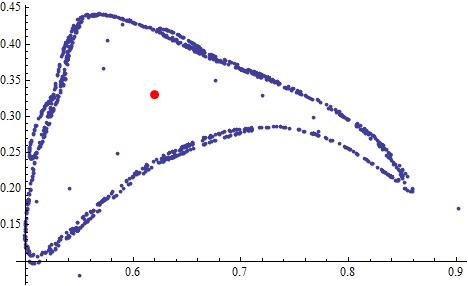}
\hfill
\caption{a=3.9, b=2, c=2, d=3.6, $\alpha=3, x^{(0)}=0.5, y^{(0)}=0.4.$  Shown $\lambda^{(n)}$,  $n=0,1,\dots,10000.$}\label{f14}
\label{figLeft}
\hfill
\includegraphics[width=7cm]{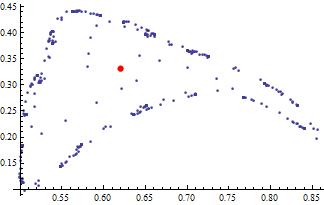}
\hfill
\caption{a=3.9, b=2, c=2, d=3.6, $\alpha=3, x^{(0)}=0.5857, y^{(0)}=0.319.$ Shown $\lambda^{(n)}$,  $n=0,1,\dots, 10000.$}\label{f14-1}
\label{figRight}
\end{multicols}
\end{figure}

 \begin{figure}[h!]
\begin{multicols}{2}
\hfill
\includegraphics[width=7cm]{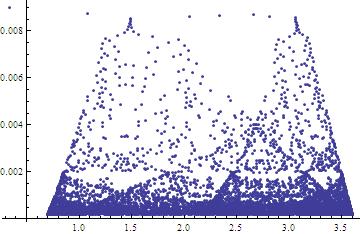}
\hfill
\caption{a=4.8, b=1, c=30, d=2, $\alpha=1, x^{(0)}=0.1, y^{(0)}=0.01.$ Shown $\lambda^{(n)}$,  $n=0,1,\dots, 10000.$}\label{181}
\label{figLeft}
\hfill
\includegraphics[width=7cm]{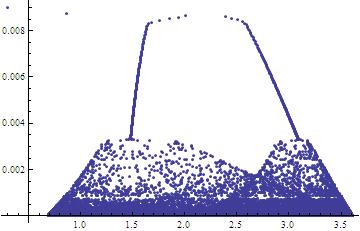}
\hfill
\caption{a=4.8, b=1, c=70, d=2, $\alpha=1, x^{(0)}=0.1, y^{(0)}=0.01.$ Shown $\lambda^{(n)}$,  $n=0,1,\dots, 100000.$}\label{182}
\label{figRight}
\end{multicols}
\end{figure}

 \begin{figure}[h!]
\begin{multicols}{2}
\hfill
\includegraphics[width=7cm]{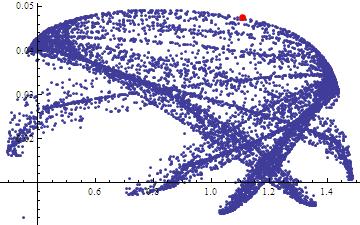}
\hfill
\caption{a=4.8, b=1.9, c=3, d=2.6, $\alpha=14, x^{(0)}=0.1, y^{(0)}=0.01.$  Shown $\lambda^{(n)}$,  $n=0,1,\dots, 10000.$}\label{183}
\label{figLeft}
\hfill
\includegraphics[width=7cm]{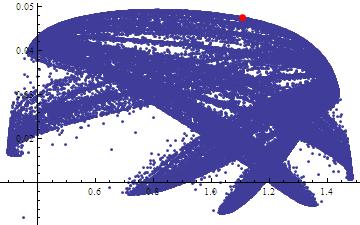}
\hfill
\caption{a=4.8, b=1.9, c=3, d=2.6, $\alpha=14, x^{(0)}=0.1, y^{(0)}=0.01.$  Shown $\lambda^{(n)}$,  $n=0,1,\dots, 100000.$}\label{184}
\label{figRight}
\end{multicols}
\end{figure}

Then
 $$J=J_0J_1...J_{106}=\begin{bmatrix}
-747074 & 278832\\
-3.41787*10^6 & 1.27566*10^6
\end{bmatrix}$$

 The eigenvalues of this matrix are $\mu_1=528588,\, \mu_2=-8.14907*10^{-10}$ and from this  the positive Lyapunov exponent is $$\lambda=\frac{\ln(528588)}{106}\approx 0.1243.$$
 Hence, in this case the trajectory is chaos \cite{Alli}.
 The dots in the XY-plane are given in Fig.\ref{f14}.

 \section*{Acknowledgements}

Shoyimardonov thanks the "El-Yurt Umidi" Foundation under the Cabinet of Ministers of the
Republic of Uzbekistan for financial support during his visit to the University of Montpellier (France) and prof. R.Varro for the invitation.


\end{document}